\theoremstyle{plain}
\newtheorem{lemma}{Lemma}
\theoremstyle{definition}
\newcounter{claim}
\newcounter{conjecture}
 \theoremstyle{remark}
\renewcommand{\phi}{\ensuremath{\varphi}}
\newcommand{\lb}{\ensuremath{\llbracket}}
\newcommand{\rb}{\ensuremath{\rrbracket}}
\newcommand{\rel}[1]{\ensuremath{\mathbin{#1}}}
\renewcommand{\leq}{\ensuremath{\leqslant}}
\newcommand{\meet}{\ensuremath{\wedge}}
\newcommand{\join}{\ensuremath{\vee}}
\newcommand{\Eq}{\ensuremath{\operatorname{Eq}}}
\begin{document}
\title{Dedekind's Transposition Principle for Lattices of Equivalence Relations}
\date{November 14, 2012}
\author{William DeMeo}
\email{williamdemeo@gmail.com}
\urladdr{\url{http://williamdemeo.wordpress.com}}
\address{Department of Mathematics\\
University of South Carolina\\Columbia 29208\\USA}

\begin{abstract}
We prove a version of Dedekind's Transposition
Principle that holds in  lattices of equivalence relations.
\end{abstract}

\maketitle
\setcounter{section}{+1}
\setcounter{secnumdepth}{2}

In this note we prove a version of Dedekind's Transposition
Principle\footnote{If $L$ is a modular lattice, then for any two elements $a, b\in L$
the intervals $\lb b, a\join b\rb$ and $\lb a\meet b, a\rb$ are isomorphic.
 See \cite{Dedekind:1900}, or~\cite[page 57]{alvi:1987}.} that
holds in all (not necessarily modular) lattices of equivalence 
relations.
Let $X$ be a set and let $\Eq X$ denote the lattice of equivalence relations on $X$.  Given $\alpha,
\beta \in \Eq X$, we define the
\emph{interval sublattice of equivalence relations above $\alpha$ and below
  $\beta$}, denoted $\lb \alpha, \beta \rb$, as follows:
\[
\lb \alpha, \beta\rb:= \{\gamma \in \Eq X \mid \alpha \leq \gamma \leq \beta\}.
\]
Let $L$ be a sublattice of $\Eq X$.  Given $\alpha,
\beta \in L$, let $\lb \alpha, \beta \rb_L:= \lb \alpha, \beta \rb \cap L$,
which we call an \emph{interval sublattice of $L$}, or more simply, an \emph{interval of $L$}.
Given 
$\alpha, \beta, \theta \in L$, let  $\lb \alpha, \beta \rb_L^\theta$ denote the
set of equivalence relations in the interval $\lb \alpha, \beta \rb_L$  that permute with
$\theta$.  That is, 
\[
\lb \alpha, \beta \rb_L^\theta:= \{\gamma \in L \mid \alpha \leq \gamma \leq
\beta \text{ and } \gamma \circ \theta = \theta \circ \gamma\}.
\]
\begin{lemma}
\label{lem:1}
If $\eta, \theta \in L\leq \Eq X$, and if $\eta\circ \theta = \theta \circ \eta$, then
\[\lb\theta, \eta \join \theta\rb_L \cong \lb\eta \meet \theta, \eta\rb_L^\theta \leq 
\lb\eta \meet \theta, \eta\rb_L.
\]
\end{lemma}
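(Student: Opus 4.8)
The plan is to run the classical proof of Dedekind's principle with the usual pair of maps and then check by hand the two facts that come free in a modular lattice but not here. Put
\[
f\colon \lb\theta,\eta\join\theta\rb_L \to \lb\eta\meet\theta,\eta\rb_L,\qquad f(\gamma)=\gamma\meet\eta,
\]
\[
g\colon \lb\eta\meet\theta,\eta\rb_L \to \lb\theta,\eta\join\theta\rb_L,\qquad g(\delta)=\delta\join\theta.
\]
Both are order-preserving, and since $L$ is a sublattice of $\Eq X$ (so meets and joins in $L$ agree with those in $\Eq X$), they really do map into the indicated intervals. The only use of the hypothesis at the lattice level is that permutability of $\eta$ and $\theta$ gives $\eta\join\theta=\eta\circ\theta=\theta\circ\eta$; hence every $\gamma$ in the left-hand interval satisfies $\gamma\subseteq\eta\circ\theta$ and $\gamma\subseteq\theta\circ\eta$.

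The crux is to show that $f(\gamma)=\gamma\meet\eta$ permutes with $\theta$ whenever $\theta\leq\gamma\leq\eta\join\theta$; this is the one step that genuinely needs the relational (as opposed to abstract-lattice) structure. I would argue with pairs: given $(x,z)\in\gamma$, choose $y$ with $(x,y)\in\eta$ and $(y,z)\in\theta$ (possible since $\gamma\subseteq\eta\circ\theta$); then $(y,z)\in\gamma$ because $\theta\leq\gamma$, so $(x,y)\in\gamma\meet\eta$, and therefore $\gamma\subseteq(\gamma\meet\eta)\circ\theta$. The reverse inclusion is immediate from $\gamma\meet\eta\subseteq\gamma$, $\theta\subseteq\gamma$ and transitivity of $\gamma$, so $\gamma=(\gamma\meet\eta)\circ\theta$; repeating the argument with the factorization $\gamma\subseteq\theta\circ\eta$ yields $\gamma=\theta\circ(\gamma\meet\eta)$, so $\gamma\meet\eta$ commutes with $\theta$. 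This has two payoffs at once: $f$ maps into $\lb\eta\meet\theta,\eta\rb_L^\theta$, and $(\gamma\meet\eta)\join\theta=\gamma$, i.e.\ $g\circ f=\mathrm{id}$. For the other composite, take $\delta\in\lb\eta\meet\theta,\eta\rb_L^\theta$; then $\delta\join\theta=\delta\circ\theta$, and a similar chase gives $(\delta\circ\theta)\cap\eta=\delta$: if $(x,y)\in\delta$, $(y,z)\in\theta$ and $(x,z)\in\eta$, then $(y,z)\in\eta$ as well, hence $(y,z)\in\eta\meet\theta\leq\delta$ and $(x,z)\in\delta$. Thus $f$ and $g$ restrict to mutually inverse order-preserving bijections between $\lb\theta,\eta\join\theta\rb_L$ and $\lb\eta\meet\theta,\eta\rb_L^\theta$, which is therefore a lattice isomorphism.

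It remains to see that $\lb\eta\meet\theta,\eta\rb_L^\theta$ is closed under the meet and join of $\lb\eta\meet\theta,\eta\rb_L$, so that ``$\leq$'' in the statement is literally sublattice inclusion. For meets: $\delta_1\meet\delta_2=\bigl(g(\delta_1)\meet g(\delta_2)\bigr)\meet\eta=f\bigl(g(\delta_1)\meet g(\delta_2)\bigr)$, and $g(\delta_1)\meet g(\delta_2)$ lies in $\lb\theta,\eta\join\theta\rb_L$, so by the crux step $\delta_1\meet\delta_2$ permutes with $\theta$. For joins I would use the elementary fact that an $\Eq X$-join of two equivalence relations permuting with $\theta$ again permutes with $\theta$: since $\delta_1\join\delta_2=\bigcup_{n}(\delta_1\circ\delta_2)^{n}$, associativity gives $(\delta_1\circ\delta_2)^{n}\circ\theta=\theta\circ(\delta_1\circ\delta_2)^{n}$ for all $n$, and composition with $\theta$ distributes over the union. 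I expect the pair-chasing that shows $\gamma\meet\eta$ permutes with $\theta$ to be the only real obstacle; everything else is monotonicity of $\meet,\join$ and formal bookkeeping with relational composition.
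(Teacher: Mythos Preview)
Your proof is correct and follows essentially the same route as the paper's: the same pair of maps $\gamma\mapsto\gamma\meet\eta$ and $\delta\mapsto\delta\join\theta$, the same relational pair-chasing to see they are mutually inverse, and the same argument for closure under joins. The only packaging differences are that the paper factors your pair-chase into a standalone ``Dedekind's Rule'' lemma, $\alpha\circ(\beta\cap\gamma)=\beta\cap(\alpha\circ\gamma)$ for $\alpha\leq\beta$, and handles closure under meets by a direct element chase rather than your (slicker) observation that $\delta_1\meet\delta_2=f\bigl(g(\delta_1)\meet g(\delta_2)\bigr)$.
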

The lemma states that the sublattice $\lb\theta, \eta \join \theta\rb_L$ is isomorphic to the 
lattice, $\lb\eta \meet \theta, \eta\rb_L^\theta$, of relations in $L$ that are below
$\eta$, above $\eta \meet \theta$, and permute with $\theta$; moreover, $\lb\eta \meet \theta,
\eta\rb_L^\theta$ is a sublattice of
$\lb\eta \meet \theta, \eta\rb_L$.
To prove this, we need the following generalized version of \emph{Dedekind's 
Rule}:\footnote{In the group theory setting, the well known Dedekind's
  Rule states that if $A, B, C$ are subgroups of a group,
  and $A\leq B$, then we have the following identity of sets: $A(B\cap C) = B
  \cap AC$.}
\begin{lemma}
\label{lem:dedekind}
If $\alpha, \beta, \gamma \in L \leq \Eq X$, and if $\alpha
\leq \beta$, then we have the following identities of subsets of $X^2$:
\begin{equation}
  \label{eq:1}
  \alpha \circ (\beta \cap \gamma) = \beta \cap (\alpha \circ \gamma),
\end{equation}
\begin{equation}
  \label{eq:2}
  (\beta \cap \gamma) \circ \alpha = \beta \cap (\gamma \circ \alpha).
\end{equation}
\end{lemma}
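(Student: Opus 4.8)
The plan is to establish \eqref{eq:1} directly by a double inclusion, using only the hypothesis $\alpha\leq\beta$ together with the symmetry and transitivity of $\beta$ (these being exactly the features that make the classical group-theoretic Dedekind rule work), and then to deduce \eqref{eq:2} from \eqref{eq:1} by passing to converse relations. Throughout I read $(x,y)\in\rho\circ\sigma$ as: there is some $z\in X$ with $(x,z)\in\rho$ and $(z,y)\in\sigma$; the choice of convention is harmless here, since \eqref{eq:1} and \eqref{eq:2} are converses of one another.

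For the forward inclusion $\alpha\circ(\beta\cap\gamma)\subseteq\beta\cap(\alpha\circ\gamma)$, take $(x,y)$ in the left-hand side, witnessed by a point $z$ with $(x,z)\in\alpha$ and $(z,y)\in\beta\cap\gamma$. The same $z$ shows $(x,y)\in\alpha\circ\gamma$, and since $(x,z)\in\alpha\leq\beta$ while $(z,y)\in\beta$, transitivity of $\beta$ gives $(x,y)\in\beta$. For the reverse inclusion, take $(x,y)\in\beta$ together with a witness $z$ for $(x,y)\in\alpha\circ\gamma$, so that $(x,z)\in\alpha$ and $(z,y)\in\gamma$; I claim this same $z$ witnesses $(x,y)\in\alpha\circ(\beta\cap\gamma)$, for which only $(z,y)\in\beta$ remains to be checked, and this follows from $(z,x)\in\alpha\leq\beta$ (symmetry of $\alpha$), $(x,y)\in\beta$, and transitivity of $\beta$. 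This proves \eqref{eq:1}.

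Identity \eqref{eq:2} then follows formally: taking the converse of each side of \eqref{eq:1}, and using that the converse operation reverses composition, commutes with intersection, and fixes each of the equivalence relations $\alpha,\beta,\gamma$, transforms \eqref{eq:1} into $(\beta\cap\gamma)\circ\alpha=\beta\cap(\gamma\circ\alpha)$; alternatively one simply reruns the inclusion argument above on the other side. I do not anticipate a genuine obstacle — the entire content is the single invocation of $\alpha\leq\beta$ inside one $\beta$-transitivity step — but some care is warranted because, unlike in the group setting, neither $\beta\cap\gamma$ nor $\alpha\circ\gamma$ need be an equivalence relation, so the argument must treat them as bare binary relations and must not covertly use symmetry or transitivity of these composites.
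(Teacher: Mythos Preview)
Your proof is correct and follows essentially the same route as the paper's: the reverse inclusion is handled identically, by showing that the same witness $z$ (the paper calls it $c$) already lies in the right $\beta$-class via $\alpha\leq\beta$ together with symmetry and transitivity of $\beta$. The only cosmetic differences are that the paper verifies the easy forward inclusion via the lattice inequality $\alpha\circ(\beta\cap\gamma)\subseteq\alpha\join(\beta\cap\gamma)\leq\beta$ rather than by an element chase, and dismisses~(\ref{eq:2}) as ``similar'' rather than explicitly passing to converses.
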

\begin{proof}
We prove~(\ref{eq:1}); the proof of (\ref{eq:2}) is similar.  
First we check that
$\alpha \circ (\beta \cap \gamma) \subseteq \beta \cap (\alpha \circ \gamma)$. 
Indeed, since $\alpha\leq \beta$, we have
\[
\alpha \circ (\beta \cap \gamma) \subseteq \alpha \join (\beta \cap \gamma) \leq
\beta \join (\beta \cap \gamma) = \beta.
\]
Also, $\beta\cap \gamma \leq \gamma$ implies 
$\alpha \circ (\beta \cap \gamma) \subseteq
\alpha \circ \gamma$.  Therefore, 
 $\alpha \circ (\beta \cap \gamma) \subseteq \beta \cap (\alpha \circ \gamma)$. 

For the reverse inclusion, fix $(x,y) \in \beta \cap (\alpha\circ \gamma)$.
Since $(x,y) \in \alpha\circ \gamma$, there exists $c\in X$ such that $x\rel{\alpha} c
\rel{\gamma} y$.  We must produce $d\in X$ such that 
$x \rel{\alpha} d \rel{(\beta\cap \gamma)} y$.  
In fact, $d = c$ works, since 
$(x,c) \in \alpha \leq \beta$ implies $c \rel{\beta} x \rel{\beta} y$, so $(c,y) \in
\beta\cap \gamma$. 
\end{proof}

\begin{proof}[Proof of Lemma~\ref{lem:1}]
Let $\eta, \theta \in L\leq \Eq X$ be permuting equivalence relations in $L$, 
so $\eta \circ \theta = \theta \circ \eta = \eta \join \theta$.
Consider the mapping
$\phi : \lb\theta, \eta \join \theta\rb_L \rightarrow \lb\eta \meet \theta,\eta\rb$ 
given by 
$\alpha \mapsto \alpha \meet \eta$.  Clearly $\phi$ maps $\lb\theta, \eta \join \theta\rb_L$ into the sublattice
$\lb\eta \meet \theta,\eta\rb_L \leq \lb\eta \meet \theta,\eta\rb$.  Moreover,
it's easy to see that the range of $\phi$
consists of elements of $L$ that permute with $\theta$, so that 
$\phi$ maps 
into $\lb\eta \meet \theta, \eta\rb_L^\theta$.
Indeed, if $\alpha \in \lb \theta, \eta 
\join \theta\rb_L$, then by 
Lemma~\ref{lem:dedekind} we have
$(\alpha \meet \eta) \circ \theta = \alpha \cap (\eta\circ \theta) = \alpha \cap
(\theta \circ \eta) = \theta \circ (\alpha \meet \eta)$.

Next, consider the mapping $\psi : \lb \eta\meet \theta, \eta \rb_L^\theta
\rightarrow \lb\theta, \eta\join \theta\rb$ given by $\psi(\alpha) = \alpha
\circ \theta$.  Note that $\psi(\alpha) = \alpha \circ \theta = \alpha \join
\theta$, an element of $L$, since the domain of $\psi$ is a set of
relations in $L$ that permute with $\theta$.
We show that the two maps
\begin{equation}
  \label{eq:phi}
\phi : \lb\theta, \eta \join \theta\rb_L \ni \alpha \longmapsto 
\alpha \meet \eta  \in \lb\eta \meet \theta, \eta\rb_L^\theta
\end{equation}
\begin{equation}
  \label{eq:psi}
\psi :  \lb\eta \meet \theta, \eta\rb^\theta_L \ni \alpha \longmapsto \alpha \circ \theta \in
\lb\theta, \eta \join \theta\rb_L.
\end{equation}
are inverse lattice isomorphisms.  It is clear that these maps are order preserving.
Also, for $\alpha \in \lb\theta, \eta \join \theta\rb_L$ we have, by Lemma~\ref{lem:dedekind},
$\psi\, \phi(\alpha) = (\alpha \meet \eta)\circ \theta = 
\alpha \cap (\eta\circ \theta) = 
\alpha \cap (\eta\join \theta) = \alpha$. For
$\alpha \in \lb\eta \meet \theta, \eta\rb_L^\theta$, we have, by Lemma~\ref{lem:dedekind},
$\phi\, \psi(\alpha) = \phi(\alpha\circ \theta) =  (\alpha
\circ \theta) \meet \eta = \alpha \circ (\theta \meet \eta)$.

To complete the proof of Lemma~\ref{lem:1}, we show that
$\lb\eta \meet \theta, \eta\rb_L^\theta$ is a sublattice of $\lb\eta \meet \theta,
\eta\rb_L$.
Fix $\alpha, \beta \in \lb\eta \meet \theta, \eta\rb_L^\theta$.  We show
\begin{equation}
  \label{eq:i}
  (\alpha \join \beta) \circ \theta \subseteq \theta\circ (\alpha \join \beta),
\end{equation}
and
\begin{equation}
  \label{eq:ii}
  (\alpha \meet \beta) \circ \theta \subseteq \theta\circ (\alpha \meet \beta).
\end{equation}
The reverse inclusions follow by symmetric arguments.

Fix $(x,y) \in (\alpha \join \beta) \circ \theta$.  Then there exist $c \in X$
and $n< \omega$ such that $x \rel{(\alpha \circ^{(n)}\beta)} c \rel{\theta} y$.
Thus, $(x,y) \in \alpha \circ^{(n)}\beta \circ \theta$.  Since $\theta$ permutes
with both $\alpha$ and $\beta$, we have $(x,y) \in \theta \circ \alpha
\circ^{(n)}\beta \subseteq \theta \circ (\alpha \join \beta)$, which proves
(\ref{eq:i}).
Fix $(x,y) \in (\alpha \meet \beta) \circ \theta$.  Then 
$(x,y) \in (\alpha \circ \theta) \cap (\beta \circ \theta) = 
(\theta \circ \alpha ) \cap (\theta \circ \beta)$.  Therefore, there exist $d_1,
\,d_2$ such that 
$x \rel{\theta} d_1 \rel{\alpha} y$ and 
$x \rel{\theta} d_2 \rel{\beta} y$.  Note that $(d_1, y) \in \alpha \leq \eta$ and 
$(d_2, y) \in \beta \leq \eta$, so $(d_1, d_2) \in \eta$.  Also, $d_1
\rel{\theta} x \rel{\theta} d_2$, so $(d_1, d_2) \in \theta$.  Therefore, 
$(d_1, d_2) \in \eta \meet \theta \leq \alpha \meet \beta$.
In particular, 
$d_1 \rel{\beta} d_2 \rel{\beta} y$, so
$(d_1,y)\in \alpha \meet \beta$.
%
Thus, $x\rel{\theta} d_1 \rel{(\alpha\meet \beta)} y$, which proves~(\ref{eq:ii}).

\end{proof}


\begin{thebibliography}{1}
\providecommand{\url}[1]{{#1}}
\providecommand{\urlprefix}{URL }
\expandafter\ifx\csname urlstyle\endcsname\relax
  \providecommand{\doi}[1]{DOI~\discretionary{}{}{}#1}\else
  \providecommand{\doi}{DOI~\discretionary{}{}{}\begingroup
  \urlstyle{rm}\Url}\fi

\bibitem{Dedekind:1900}
Dedekind, R.: Ueber die von drei {M}oduln erzeugte {D}ualgruppe.
\newblock Math. Ann. \textbf{53}(3), 371--403 (1900).
\newblock \urlprefix\url{http://dx.doi.org/10.1007/BF01448979}

\bibitem{alvi:1987}
McKenzie, R.N., McNulty, G.F., Taylor, W.F.: Algebras, lattices, varieties.
  {V}ol. {I}.
\newblock Wadsworth \& Brooks/Cole, Monterey, CA (1987)

\end{thebibliography}
\def\cprime{$'$} \def\cprime{$'$}
  \def\ocirc#1{\ifmmode\setbox0=\hbox{$#1$}\dimen0=\ht0 \advance\dimen0
  by1pt\rlap{\hbox to\wd0{\hss\raise\dimen0
  \hbox{\hskip.2em$\scriptscriptstyle\circ$}\hss}}#1\else {\accent"17 #1}\fi}

\end{document}